\DeclareMathAlphabet{\mathsf}{OT1}{\sfdefault}{m}{n}
\newcommand{\nocontentsline}[3]{}
\newcommand{\tocless}[2]{\bgroup\let\addcontentsline=\nocontentsline#1{#2}\egroup}
\def\dual#1{\expandafter\dual@aux#1\@nil}
\def\dual@aux#1/#2\@nil{\begin{tabular}{@{}c@{}}#1\\#2\end{tabular}}
\DeclareMathAlphabet{\amathbb}{U}{bbold}{m}{n}
\newtheoremstyle{teoremas}
{10pt}
{10pt}
{\itshape}
{}
{\bfseries}
{}
{.5em}
{}
\theoremstyle{teoremas}
\newtheorem{theorem}{Theorem}[section]
\newtheorem{lemma}[theorem]{Lemma}
\newtheorem{proposition}[theorem]{Proposition}
\newtheoremstyle{definition}
{10pt}
{10pt}
{}
{}
{\bfseries}
{}
{.5em}
{}
\theoremstyle{definition}
\newtheorem{example}[theorem]{Example}
\newtheorem{remark}[theorem]{Remark}
\crefname{theorem}{theorem}{theorems}
\Crefname{theorem}{Theorem}{Theorems}
\crefname{lemma}{lemma}{lemmas}
\Crefname{lemma}{Lemma}{Lemmas}
\crefname{proposition}{proposition}{propositions}
\Crefname{proposition}{Proposition}{Propositions}
\tikzstyle{rectan} = [rectangle, rounded corners, 
\tikzstyle{ghost} = [circle, 
\DeclareMathOperator{\rk}{rk}
\newcommand{\M}{\mathsf{M}}
\newcommand{\U}{\mathsf{U}}
\newcommand{\ehr}{\operatorname{ehr}}
   \def\MR#1{}
\title[The Ehrhart polynomial of a matroid specializes to the $\beta$-invariant]{The Ehrhart polynomial of a matroid\\specializes to the beta invariant}
\author[Chavez]{Anastasia Chavez}
\address{Department of Mathematics and Computer Science, Saint Mary's College of California, Moraga, CA, USA}
\email{amc59@stmarys-ca.edu}
\author[Dorpalen-Barry]{Galen Dorpalen-Barry}
\address{Department of Mathematics, Texas A\&M University, College Station, TX, USA}
\email{dorpalen-barry@tamu.edu }
\author[Ferroni]{Luis Ferroni}
\address{Dipartimento di Matematica, Universit\`a di Pisa, Pisa, Italy}
\email{luis.ferroni@unipi.it}
\author[Liu]{Fu Liu}
\address{Department of Mathematics, University of California, Davis, Davis, CA, USA}
\email{fuliu@ucdavis.edu}
\author[Rinc\'on]{Felipe Rinc\'on}
\address{School of Mathematical Sciences, Queen Mary University of London, London, UK}
\email{f.rincon@qmul.ac.uk}
\author[Vindas-Mel\'endez]{Andr\'es R. Vindas-Mel\'endez}
\address{Department of Mathematics, Harvey Mudd College, Claremont, CA, USA}
\email{avindasmelendez@g.hmc.edu}
\begin{document}

\allowdisplaybreaks

\begin{abstract}
    We show that the linear coefficient of the Ehrhart polynomial of a matroid base polytope evaluated at $t-1$ is equal to, up to normalization, the $\beta$-invariant of the matroid.
    This yields a lattice-point counting formula for the $\beta$-invariant and establishes a new and unexpected positivity property of Ehrhart polynomials of matroid polytopes.
\end{abstract}

\subjclass[2020]{Primary: 05B35, 05A15, 52B05, 52B20, 52B40, 05A10} 

\keywords{matroids, beta invariant, lattice-point enumeration, Ehrhart polynomial}

\maketitle

\section{Introduction}

Let $\M$ be a matroid with ground set $E=[n]$ and set of bases $\mathscr{B}$. 
One can encode $\M$ as a lattice polytope $\mathscr{P}(\M)\subseteq \mathbb{R}^n$ defined by
    \[ \mathscr{P}(\M) := \text{convex hull} \{ e_B: B\in \mathscr{B}\},\]
where $e_B := \sum_{i\in B} e_i$ for each $B\in \mathscr{B}$, and each $e_i$ denotes a vector of the canonical basis of $\mathbb{R}^n$. 
Through this geometric perspective, matroid functions can be viewed as maps on these polyhedra.
A condition on matroid functions that is particularly useful in the language of base polytopes is that of \emph{valuativity}. 
Roughly speaking, a matroid function is valuative whenever it respects inclusion-exclusion under matroid polytope subdivisions (see \cite{ardila-fink-rincon}).
In recent years, there has been a myriad of articles studying valuative functions on matroids (see \cite{ardila-fink-rincon,derksen-fink,ardila-sanchez,ferroni-schroter,eur-huh-larson,ferroni-fink}).
Often, mathematicians focus on matroid functions that are invariants under matroid isomorphisms---such valuative functions are called \emph{valuative invariants} of matroids. 
Despite being a non-obvious property, most matroid invariants are indeed valuative.
However, the reason why valuativity holds is not often apparent.

In this short article we are concerned with two matroid invariants, both of which are valuative: the \emph{Ehrhart polynomial} of the matroid polytope and the \emph{$\beta$-invariant} of the matroid.
In what follows we recapitulate some facts about these two invariants.

The $\beta$-invariant of a matroid $\M$ was introduced by Crapo \cite{crapo}.
It associates to every matroid $\M$ the number
\[ \beta(\M) := (-1)^{\rk(\M)} \sum_{A\subseteq E} (-1)^{|A|} \rk(A).\]
This is a fundamental statistic associated to the matroid $\M$, and it has been extensively used and studied in the literature.
Among other contexts, it appears naturally when studying hyperplane arrangements \cite{zaslavsky,greene-zaslavsky}, graph theory \cite{crapo,brylawski,oxley-beta}, simplicial complexes \cite{bjorner}, and even tropical geometry \cite{ardila-eur-penaguiao, lopez-rincon-shaw}. 
Even though it is not a priori obvious from its definition, the $\beta$-invariant of a matroid is a nonnegative integer. 
Indeed, $\beta(\M)=1$ if $\M$ is a single coloop, $\beta(\M)=0$ if $\M$ is a single loop or $\M$ is disconnected, and the following deletion-contraction recursion holds:
    \[ \beta(\M) = \beta(\M\setminus i) + \beta(\M/i) \enspace \text{whenever $i\in E$ is not a loop nor a coloop.}\]
If $|E|\geq 2$, then $\beta(\M) = 0$ if and only if the matroid $\M$ is disconnected, and $\beta(\M) = 1$ if and only if the matroid $\M$ is a series-parallel matroid (see \cite{brylawski}).

The Ehrhart polynomial of a matroid polytope, by contrast, has been systematically studied only in the last two decades. 
The Ehrhart polynomial of a matroid $\M$ is defined as the polynomial $\ehr(\M,t)\in \mathbb{Q}[t]$ such that
    \[ \ehr(\M,t) = \#(t\mathscr{P}(\M)\cap \mathbb{Z}^n)\]
for every positive integer $t$.
The fact that this counting function is indeed interpolated by a polynomial in $t$ is a classical result due to Ehrhart \cite{ehrhart}.
We refer to the monograph \cite{beck-robins} for a friendly introduction to this topic.

Much of the interest around Ehrhart polynomials of matroids has revolved around conjectures posed by De Loera, Haws, K\"oppe \cite{deloera-haws-koppe}. 
Recently, the conjecture asserting the positivity of the coefficients of $\ehr(\M,t)$ was disproved in \cite{ferroni}. 
However, many positivity properties have been established (see \cite{castillo-liu,castillo-liu2,jochemko-ravichandran,ferroni1,ferroni-morales-panova}).

The main result of this article reveals a surprising relationship between the $\beta$-invariant of a matroid and the Ehrhart polynomial of its base polytope.

\begin{theorem}\label{thm:main}
    Let $\M$ be a matroid of rank $k$ on $n$ elements, without loops or coloops. 
    The $\beta$-invariant of $\M$ can be obtained from the Ehrhart polynomial of $\mathscr{P}(\M)$ as:
    \[ \frac{\beta(\M)}{(n-1) \binom{n-2}{k-1}} = \frac{\mathrm{d}}{\mathrm{d}t} \thinspace \ehr(\M, t)  \bigg{|}_{t=-1} = [t^1] \ehr(\M, t - 1).\]
\end{theorem}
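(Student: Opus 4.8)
The plan is to exploit the fact that both sides of the identity are valuative invariants of the matroid $\M$, and to verify the equality on a spanning set of the valuative group. Since the $\beta$-invariant and every coefficient of $\ehr(\M,t)$ are valuative, the quantities $\beta(\M)$ and $[t^1]\ehr(\M,t-1)$ are both elements of the valuation ring of matroids of rank $k$ on $n$ elements. Thus it suffices to check the identity on a generating family. The natural candidates are the \emph{Schubert matroids} (also called shifted matroids), which are known to span the valuative group and for which both invariants admit tractable combinatorial descriptions. A further reduction comes from connectivity: $\beta(\M)=0$ whenever $\M$ is disconnected, so I would first verify that the right-hand side also vanishes for disconnected matroids, reducing the verification to connected representatives.

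First I would reinterpret the derivative evaluation $\frac{\mathrm d}{\mathrm dt}\,\ehr(\M,t)\big|_{t=-1}$ via Ehrhart--Macdonald reciprocity, which identifies $\ehr(\M,-t)$ (up to sign) with the interior lattice-point count of $t\mathscr{P}(\M)$. Because $\mathscr{P}(\M)$ has dimension $n-1$ (it lies in the hyperplane $\sum_i x_i = k$), the leading behavior near $t=-1$ is governed by the number of interior lattice points of the dilate, and differentiating picks out a boundary contribution. The normalization factor $(n-1)\binom{n-2}{k-1}$ strongly suggests that this coefficient measures a codimension-one (facet) contribution, where $(n-1)$ counts something like the ambient dimension and $\binom{n-2}{k-1}$ is the normalized volume of a facet of the hypersimplex; I would make this precise by relating $[t^1]\ehr(\M,t-1)$ to a sum over facets of $\mathscr{P}(\M)$ of their relative volumes, weighted by how the matroid structure restricts to each facet.

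The second main step is to compute the $\beta$-invariant side on Schubert matroids using the deletion–contraction recursion
\[
\beta(\M) = \beta(\M\setminus i) + \beta(\M/i)
\]
for $i$ neither a loop nor a coloop, together with the base cases $\beta(\text{coloop})=1$, $\beta(\text{loop})=\beta(\text{disconnected})=0$. For Schubert matroids these recursions telescope into an explicit closed form in terms of the defining parameters. In parallel, I would compute the Ehrhart polynomial of a Schubert matroid polytope—these are nested/lattice-path polytopes whose Ehrhart data is accessible—and extract its linear coefficient after the shift $t\mapsto t-1$, ideally matching the same recursion or closed form.

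The hardest part will be the last step: establishing that the linear coefficient of $\ehr(\M,t-1)$ obeys the \emph{same} deletion–contraction recursion as $\beta(\M)$, or else directly matching the two closed forms on Schubert matroids. Deletion and contraction of a matroid correspond to geometric operations on $\mathscr{P}(\M)$ (facets and projections), but these operations do not act simply on Ehrhart polynomials, so transferring the recursion through the dilated lattice-point count is delicate. I expect the normalization $(n-1)\binom{n-2}{k-1}$ to be forced precisely by requiring compatibility with the recursion across the changing parameters $(n,k)$ under deletion (which fixes $k$, decreases $n$) and contraction (which decreases both), and reconciling these two parameter shifts is where the bulk of the technical work will lie.
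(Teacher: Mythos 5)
Your overall skeleton---both sides are valuative, so verify the identity on a family spanning the valuative group, after first checking that both sides vanish on disconnected matroids---is exactly the strategy of the paper, and the disconnected reduction is correct (the paper proves it via Ehrhart--Macdonald reciprocity: each connected factor $\M_i$ has a $0/1$-polytope with no interior lattice points, so $\ehr(\M_i,-1)=0$, and with $s\geq 2$ factors $-1$ becomes a root of $\ehr(\M,t)$ of multiplicity at least two). The gap is in the step you yourself flag as the hardest: actually verifying the identity on your chosen spanning family. Schubert matroids do span the valuative group (Derksen--Fink), but on them neither side is constant---both $\beta$ and the linear Ehrhart coefficient vary nontrivially with the defining set---and the two tools you propose for matching them do not work. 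There is no deletion--contraction recursion for Ehrhart polynomials of matroid polytopes (deletion and contraction do not act on $\mathscr{P}(\M)$ in a way that behaves additively on lattice-point counts; this is precisely why Ehrhart-theoretic statements about matroids are hard), and the reciprocity/facet-volume heuristic in your second paragraph does not isolate the linear coefficient of $\ehr(\M,t-1)$: that coefficient is not a sum of facet volumes, and no closed formula for $\ehr$ of a general Schubert matroid polytope is available in a form that would let you extract it.

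The paper sidesteps all of this by choosing a different spanning family on which \emph{both sides are constant}. By a theorem of Ferroni--Schr\"oter, indicator functions of matroid polytopes are integer combinations of those of direct sums of snake matroids (plus loops and coloops). For a connected snake $\mathsf{S}$ one has $\beta(\mathsf{S})=1$ (snakes are series-parallel), and its base polytope is the order polytope of a fence poset, so $\ehr(\mathsf{S},t-1)$ is the order polynomial of that poset; a computation of Ferroni--Morales--Panova shows its linear coefficient equals $\frac{1}{(n-1)\binom{n-2}{k-1}}$ regardless of which snake it is. The identity then follows by summing the decomposition with the disconnected terms discarded. If you want to salvage your plan, the essential missing ingredient is a spanning family together with an actual computation of $[t^1]\ehr(\cdot,t-1)$ on each member; without that, the argument does not close.
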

Note that a matroid on $n\geq 2$ elements that has loops or coloops is disconnected and hence its $\beta$-invariant vanishes.

Theorem \ref{thm:main} allows us to derive several previously unknown facts about Ehrhart polynomials and $\beta$-invariants of matroids.
First, since the $\beta$-invariant is nonnegative for all matroids, the above formula asserts a new positivity property of Ehrhart polynomials of matroids, namely, that the linear coefficient of $\ehr(\M,t-1)$ is nonnegative. 
We point out that Jochemko--Ravichandran \cite{jochemko-ravichandran} and, independently, Castillo--Liu \cite{castillo-liu2} proved that the linear term of $\ehr(\M,t)$, i.e., $\frac{\mathrm{d}}{\mathrm{d}t} \thinspace \ehr(\M,t)\big{|}_{t=0}$, is nonnegative. 
Despite the resemblance, the aforementioned result does not imply nor is implied by ours.

Second, since the valuativity of the Ehrhart polynomial is obvious from the definition, our result gives a simple conceptualization of why the $\beta$-invariant is also a valuative invariant. 
The valuativity of the $\beta$-invariant was originally established in the work of Speyer \cite{tropicallinearspaces} and Ardila--Fink--Rinc\'on \cite{ardila-fink-rincon} via topological reasoning. 
It is worth noting that, although our result \emph{explains} the valuativity of $\beta$, this property of $\beta$ plays a crucial role in our proof.

Third, our result gives a polyhedral way of viewing the $\beta$-invariant.
It is not at all clear that the $\beta$-invariant arises from counting lattice points in the dilation of the matroid base polytope.
To our knowledge, the only result in the literature that is similar in flavor is a formula by Cameron and Fink \cite{cameron-fink} computing the Tutte polynomial---in particular, the $\beta$-invariant---of a matroid by counting lattice points in the Minkowski sum $\mathscr{P}(\M) + x\Delta + y\nabla$, where $\Delta$ and $\nabla$ are the standard simplex and its reflection through the origin, and $x$ and $y$ are integer parameters. 
In our result, the enumeration is performed on dilations of the base polytope, whereas in Cameron--Fink's result, the enumeration is carried out on a subtler object.

\section{The proof}

We assume some familiarity with fundamental terminology of matroid theory and matroid base polytopes, but we refer the reader to~\cite{oxley,Welsh} for a more careful treatment. 
We will also use several standard facts about Ehrhart polynomials, which can be found in \cite{beck-robins}.

We make a brief recapitulation about lattice-path matroids. Fix two nonnegative integers $k \leq n$. We look at lattice paths in the plane $\mathbb{R}^2$ that start at $(0,0)$, end at $(n-k,k)$, and are composed of exactly $n$ unit steps, each being either $+(1,0)$ or $+(0,1)$. 

Each lattice path can be represented as a word of length $n$ in the alphabet $\{N,E\}$, where the $i$-th letter is $N$ if the $i$-th step is $+(0,1)$ (``north'') and $E$ if it is $+(1,0)$ (``east''). See Figure~\ref{fig:lattice-path} for an example of two lattice paths from $(0,0)$ to $(5,4)$. 

\begin{figure}[ht]
\centering
\begin{tikzpicture}[scale=.75, line width=.9pt]

\def\s{1} 

\coordinate (L0) at (0,0);
\coordinate (L1) at (1,0);
\coordinate (L2) at (2,0);
\coordinate (L3) at (3,0);
\coordinate (L4) at (3,1);
\coordinate (L5) at (4,1);
\coordinate (L6) at (4,2);
\coordinate (L7) at (4,3);
\coordinate (L8) at (5,3);
\coordinate (L9) at (5,4);

\coordinate (U0) at (0,0);
\coordinate (U1) at (0,1);
\coordinate (U2) at (1,1);
\coordinate (U3) at (2,1);
\coordinate (U4) at (2,2);
\coordinate (U5) at (3,2);
\coordinate (U6) at (3,3);
\coordinate (U7) at (3,4);
\coordinate (U8) at (4,4);
\coordinate (U9) at (5,4);

\fill[blue!20]
(L0) -- (L1) -- (L2) -- (L3) -- (L4) -- (L5) -- (L6) -- (L7) -- (L8) -- (L9)
-- (U9) -- (U8) -- (U7) -- (U6) -- (U5) -- (U4) -- (U3) -- (U2) -- (U1) -- (U0)
-- cycle;

\foreach \x in {0,1,...,4} {
\foreach \y in {0,1,...,3} {
\draw[gray!70] (\x,\y) rectangle ++(1,1);
}
}

\draw[blue, thick] (L0) -- (L1) -- (L2) -- (L3) -- (L4) -- (L5) -- (L6) -- (L7) -- (L8) -- (L9);
\draw[red, thick]  (U0) -- (U1) -- (U2) -- (U3) -- (U4) -- (U5) -- (U6) -- (U7) -- (U8) -- (U9);
\end{tikzpicture}
\caption{The paths $L = \text{EEENENNEN}$ and $U = \text{NEENENNEE}$ for $k = 4$ and $n = 9$. The region between them is shaded.}\label{fig:lattice-path} 
\end{figure}

Alternatively, one may encode a path $P$ by the subset $s(P)\subseteq [n]$ consisting of the indices of its north steps. For the two paths in \Cref{fig:lattice-path}, we have $s(L) = \{4,6,7,9\}$ and $s(U) = \{1,4,6,7\}$.

For fixed integers $k$ and $n$, let $L$ and $U$ be two lattice paths of this type. We say that $L$ \emph{lies below} $U$ if, for every $m = 1, \ldots, n$,
\begin{equation}\label{eq:ineq-lattice-paths}
    \bigl| s(L) \cap [m] \bigr| \leq \bigl| s(U) \cap [m] \bigr|\,.
\end{equation}
In such case, we write $L\leq U$.
In \Cref{fig:lattice-path}, for example, $L\leq U$.
It follows from \cite[Theorem~3.3]{bonin-demier} that for every pair of lattice paths $L\leq U$, the collection of sets $\{s(P) : L\leq P \leq U\}$ defines the family of bases of a matroid on $[n]$ of rank $k$. Matroids arising in this way are called \emph{lattice-path matroids}. The skew shape that lies between $L$ and $U$ is often called the \emph{skew-shape representation} of a lattice-path matroid. For example, in Figure~\ref{fig:snake-fence} (on the left) we see the skew-shape representation of the lattice-path matroid of rank $4$ on $n=9$ elements, having $s(L) = \{4,6,7,9\}$ and $s(U) = \{1,4,6,7\}$. It is a well-known fact that a lattice-path matroid is connected if and only if the paths $L$ and $U$ intersect only at $(0,0)$ and $(n-k,k)$ (see \cite[Theorem~3.6]{bonin-demier}).

A \emph{snake} matroid is a connected lattice-path matroid whose skew-shape representation is a {\em border strip}, i.e., it does not contain a $2\times 2$ square (for additional details on snake matroids, see \cite{knauer-martinez-ramirez,ferroni-schroter,benedetti-knauer-valencia,ferroni-morales-panova}).
Every skew shape $\lambda/\mu$ comes equipped with a \emph{cell poset} $P_{\lambda/\mu}$, whose Hasse diagram is obtained from $\lambda/\mu$ by replacing each box with a vertex, connecting adjacent cells, and rotating clockwise by $45$ degrees.
The cell posets of skew shapes corresponding to snake matroids are thus \emph{fence posets} in the sense of \cite[Exercise 3.66]{stanley-ec1}.
In Figure~\ref{fig:snake-fence}, we give an example of a skew shape whose lattice-path matroid is a snake, as well as the corresponding fence poset.

Importantly for us, base polytopes of snake matroids are (up to a lattice-point-preserving transformation) the order polytopes of their cell posets; see \cite[Theorem~4.7]{knauer-martinez-ramirez}.
Since the Ehrhart polynomials of order polytopes can be computed from the combinatorics of the poset directly~\cite[Theorem 4.1]{stanley-poset_polytopes}, we can compute the Ehrhart polynomial of a snake matroid using its associated fence poset.

\begin{figure}[h]
\begin{tikzpicture}[scale=.75,line width=.9pt]
  \def\s{1}
  \foreach \i in {0,1,2} {
    \draw (\i*\s, 0) rectangle ++(\s, \s);
  }
  \foreach \i in {0,1} {
    \draw (2*\s + \i*\s, \s) rectangle ++(\s, \s);
  }
  \foreach \j in {0,1,2} {
    \draw (3*\s, \s + \j*\s) rectangle ++(\s, \s);
  }
  \draw (4*\s, 3*\s) rectangle ++(\s, \s);
\end{tikzpicture}
\qquad\qquad\qquad
\begin{tikzpicture}[scale=.75, rotate=-45,line width=.9pt]
  \def\s{1}
  \coordinate (A) at (0.5*\s, 0.5*\s);  
  \coordinate (B) at (1.5*\s, 0.5*\s);
  \coordinate (C) at (2.5*\s, 0.5*\s);
  \coordinate (D) at (2.5*\s, 1.5*\s);  
  \coordinate (E) at (3.5*\s, 1.5*\s);
  \coordinate (F) at (3.5*\s, 2.5*\s);  
  \coordinate (G) at (3.5*\s, 3.5*\s);
  \coordinate (H) at (4.5*\s, 3.5*\s);  

  \foreach \i/\j in {A/B, B/C, C/D, D/E, E/F, F/G, G/H} {
    \draw (\i) -- (\j);
  }

  \foreach \p in {A,B,C,D,E,F,G,H} {
    \fill (\p) circle (4pt);
  }
\end{tikzpicture}
\caption{The skew shape $\lambda / \mu = (5,4,4,3) / (3,3,2,0)$ and its cell poset $P_{\lambda/\mu}$. Since $\lambda / \mu$ is a border strip, $P_{\lambda/\mu}$ is a fence poset.}
\label{fig:snake-fence}
\end{figure}

Recently, Ferroni and Schr\"oter \cite{ferroni-schroter} proved that indicator functions of matroid polytopes can be written as linear combinations of indicator functions of direct sums of snake matroids, loops, and coloops.
This result will be a key step in our proofs, so we make it precise below.

For each matroid $\M$ on $E$ of rank $k$, the indicator function of its base polytope is the map $\amathbb{1}_{\mathscr{P}(\M)}:\mathbb{R}^n\to \mathbb{Z}$ defined by
    \[ \amathbb{1}_{\mathscr{P}(\M)}(x) = \begin{cases} 1 & x\in \mathscr{P}(\M) \\0 & x\notin \mathscr{P}(\M) \end{cases}.\]


\begin{theorem}[{{\cite[Theorem~A.7]{ferroni-schroter}}}]\label{thm:snakes-valuative-group}
    Let $\M$ be a matroid. 
    Then there exist matroids $\mathsf{S}_1,\ldots \mathsf{S}_m$ and integers $a_1,\ldots,a_m$ such that:
    \begin{enumerate}[\normalfont(i)]
    \item Each $\mathsf{S}_i$ is a direct sum of snake matroids, loops, and coloops.
    \item The set of loops (resp. coloops) of each $\mathsf{S}_i$ coincides with the set of loops (resp. coloops) of $\M$.
    \item We have $\amathbb{1}_{\mathscr{P}(\M)} = \sum_{i=1}^m a_i\, \amathbb{1}_{\mathscr{P}(\mathsf{S}_i)}$.\label{eq:M-as-sum-of-snakes}
    \end{enumerate}
\end{theorem}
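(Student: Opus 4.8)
The statement is a structural result about the valuative group, so the plan is to work inside the free abelian group spanned by the indicator functions $\amathbb{1}_{\mathscr{P}(\M)}$ of matroid base polytopes and to produce the claimed expansion there. First I would dispose of loops and coloops. If $\M$ has loop set $L$ and coloop set $C$, then $\mathscr{P}(\M)$ lies in the subspace cut out by $x_i=0$ for $i\in L$ and $x_i=1$ for $i\in C$, and the indicator factors as the indicator of the base polytope of the loopless–coloopless part $\M'$ on $E\setminus(L\cup C)$ times the pinning factors. Using the multiplicativity $\amathbb{1}_{\mathscr{P}(\M_1\oplus\M_2)} = \amathbb{1}_{\mathscr{P}(\M_1)}\cdot\amathbb{1}_{\mathscr{P}(\M_2)}$ coming from $\mathscr{P}(\M_1\oplus\M_2)=\mathscr{P}(\M_1)\times\mathscr{P}(\M_2)$, it then suffices to expand $\amathbb{1}_{\mathscr{P}(\M')}$ as an integer combination of indicators of loopless–coloopless direct sums of snakes and reattach the common loops $L$ and coloops $C$ to every summand; this reattachment is exactly what enforces condition (ii).

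For the loopless–coloopless core I would invoke the theorem of Derksen–Fink \cite{derksen-fink} that the indicator functions of Schubert (nested) matroids form a basis of the valuative group, so that $\amathbb{1}_{\mathscr{P}(\M')}$ is an integer combination of Schubert matroid indicators. Schubert matroids are precisely the lattice-path matroids whose skew-shape representation is a straight (non-skew) Young diagram, so the second step is to rewrite each such lattice-path matroid as an integer combination of direct sums of snakes. The natural mechanism is a subdivision of a lattice-path matroid polytope into base polytopes of snake matroids: applying inclusion–exclusion (i.e.\ valuativity) to such a subdivision expresses $\amathbb{1}_{\mathscr{P}(\M')}$ as a signed sum of snake indicators together with the indicators of the internal faces. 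Since snake polytopes are order polytopes of fence posets, their faces are products of order polytopes of sub-fences, hence are base polytopes of direct sums of snakes (with isolated points producing loops and coloops), so the whole alternating sum stays inside the claimed class. A purely algebraic alternative is to exhibit a unitriangular transition between snake-sum indicators and the Schubert basis with respect to a dominance-type order on the indexing $k$-subsets and conclude, by a matching dimension count, that the snake-sums also span.

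The main obstacle is reconciling these two reductions with condition (ii). The Derksen–Fink expansion of a loopless–coloopless indicator can, and in general will, involve Schubert matroids carrying loops or coloops, since lower-dimensional strata are permitted to appear as long as their contributions cancel pointwise; similarly, the inclusion–exclusion over a subdivision produces internal faces that acquire new loops and coloops. The delicate point is therefore to arrange the expansion so that only snake-sums with exactly the loop set $L$ and coloop set $C$ of $\M$ survive. I expect this to require either a loop/coloop–refined version of the Derksen–Fink basis—splitting the valuative group as a direct sum indexed by prescribed loop and coloop sets and choosing a basis adapted to this splitting—or an induction on the number of $2\times 2$ squares in the skew-shape representation that performs only local moves preserving loops and coloops, so that condition (ii) holds at every stage. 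Verifying that such moves are genuine valuative relations among lattice-path matroids and that they strictly decrease a well-founded complexity is where the real work lies; the remaining steps, namely the direct-sum bookkeeping and the reattachment of the fixed loops and coloops, are routine.
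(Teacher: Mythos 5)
First, a point of reference: the paper does not prove this statement at all---it is imported verbatim as \cite[Theorem~A.7]{ferroni-schroter}---so the only meaningful comparison is between your outline and the proof in that reference. Your high-level strategy does track the actual argument: reduce to the loopless--coloopless core using the product decomposition $\mathscr{P}(\M_1\oplus\M_2)=\mathscr{P}(\M_1)\times\mathscr{P}(\M_2)$, invoke Derksen--Fink to land in the span of Schubert (nested) matroid indicators, observe that these are lattice-path matroids, and then decompose lattice-path matroids into snakes via polytope subdivisions, i.e.\ an induction on the number of $2\times 2$ squares in the skew shape. That is essentially the route taken in the appendix of Ferroni--Schr\"oter.

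However, as a proof the proposal has a genuine gap, and you identify it yourself: everything hinges on exhibiting the actual valuative relation that removes a $2\times 2$ square, and you never produce it. Concretely, one needs to show that a connected lattice-path matroid whose diagram contains a $2\times 2$ square admits a hyperplane split (along $\sum_{i\in A}x_i = r$ for a suitable interval $A$ of the ground set) into two lattice-path matroids with strictly fewer $2\times 2$ squares, whose common face is the base polytope of a direct sum of two lattice-path matroids on complementary intervals; one must then check that all three pieces are loopless and coloopless whenever the original matroid is, so that condition (ii) survives the induction, and that the complexity measure is genuinely well-founded across the direct summands appearing in the intersection face. Without this lemma the inclusion--exclusion step is vacuous, and your two fallback suggestions (a loop/coloop-refined Derksen--Fink basis, or unspecified ``local moves'') are named but not constructed. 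A secondary inaccuracy: you justify that internal faces stay in the snake-sum class by appealing to faces of order polytopes being products of order polytopes of sub-fences, but faces of order polytopes are order polytopes of \emph{quotient} posets, which need not be fences; the correct statement is that faces of matroid base polytopes are base polytopes of direct sums of minors, and for lattice-path matroids these minors are again lattice-path matroids---which is what the induction actually needs. So the skeleton is right, but the load-bearing lemma is missing.
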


Valuative invariants of matroids factor through indicator functions of polytopes. 
Since the Ehrhart polynomial is a valuative invariant, Theorem~\ref{thm:snakes-valuative-group}(\ref{eq:M-as-sum-of-snakes}) implies
    \begin{equation}\label{eq:ehrhart}
    \ehr(\M,t) = \sum_{i=1}^m a_i \ehr(\mathsf{S}_i,t).
    \end{equation}
This equality will allow us to prove Theorem~\ref{thm:main} by focusing on Ehrhart polynomials of (direct sums of) snake matroids.

\begin{proposition}\label{pro:linearforsankes}
    Let $\mathsf{S}$ be a snake matroid on $n$ elements and rank $k$. 
    Then,
\[ \frac{\mathrm{d}}{\mathrm{d}t} \thinspace \ehr(\mathsf{S}, t) \bigg{|}_{t=-1} = [t^1] \ehr(\mathsf{S}, t - 1) = \frac{1}{(n-1) \binom{n-2}{k-1}}.\]
\end{proposition}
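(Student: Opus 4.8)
The plan is to compute the left–hand side through the order polytope of the fence poset $P$ attached to $\mathsf{S}$. By the cited result of Knauer--Mart\'inez-Ram\'irez, $\mathscr{P}(\mathsf{S})$ is lattice-equivalent to the order polytope $\mathcal{O}(P)$, and Stanley's theory gives $\ehr(\mathsf{S},t)=\Omega_P(t+1)$, where $\Omega_P(m)$ counts weakly order-preserving maps $P\to\{1,\dots,m\}$. Thus $\ehr(\mathsf{S},t-1)=\Omega_P(t)$, and the quantity to compute is exactly $[t^1]\Omega_P(t)=\Omega_P'(0)$. Since $\mathsf{S}$ is connected of rank $k$ on $n$ elements, its border strip has $n-1$ cells spanning all $k$ rows and all $n-k$ columns, so $P$ is a fence on $d=n-1$ vertices whose $n-2$ cover relations split into $k-1$ ascents and $n-k-1$ descents as one traverses it. Writing $a=k-1$ and $b=n-k-1$, the target value equals $\frac{a!\,b!}{(a+b+1)!}=\int_0^1 x^a(1-x)^b\,dx$, and this Beta-integral reformulation is what I would aim to hit.

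The crucial and least obvious step is that $[t^1]\Omega_P(t)$ depends only on the numbers $a,b$, not on the order in which ascents and descents occur (the full polynomial $\Omega_P$ does depend on this order). I would prove this by a transfer-matrix argument. For a value bound $m$, encode maps via the $m\times m$ matrices $A,D$ with $A_{v',v}=[v'\ge v]$ and $D_{v',v}=[v'\le v]$, so that $\Omega_P(m)=\mathbf{1}^\top M_{d-1}\cdots M_1\mathbf{1}$ where the word $M_1\cdots M_{d-1}\in\{A,D\}^{d-1}$ records the ascent/descent pattern. A direct computation yields the rank-two commutator $AD-DA=c\,\mathbf{1}^\top+\mathbf{1}\,c^\top$, where $c$ is the centered vector $c_v=v-\tfrac{m+1}{2}$. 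Swapping two adjacent letters of the pattern therefore changes $\Omega_P(m)$ by $\pm\,\mathbf{1}^\top X(c\,\mathbf{1}^\top+\mathbf{1}\,c^\top)Y\mathbf{1}=(\mathbf{1}^\top X c)(\mathbf{1}^\top Y\mathbf{1})+(\mathbf{1}^\top X\mathbf{1})(c^\top Y\mathbf{1})$ for subwords $X,Y$. Using $A^\top=D$ and $\mathbf{1}^\top c=0$, each of the four factors vanishes at $m=0$, being a combination of fence order polynomials; hence each product has no term of degree $\le 1$ in $m$, and the swap leaves the linear coefficient unchanged. Since any pattern can be sorted to $U^aD^b$ by adjacent transpositions, $[t^1]\Omega_P$ is an invariant of $(a,b)$.

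It then remains to evaluate $[t^1]\Omega_{U^aD^b}(t)$ for the canonical fence $x_1<\cdots<x_{a+1}>\cdots>x_{a+b+1}$, which has a unique maximal element; conditioning on its value $c$ gives $\Omega_{U^aD^b}(t)=\sum_{c=1}^t\binom{c+a-1}{a}\binom{c+b-1}{b}$. I would extract the linear coefficient from the generating function $\sum_{s\ge0}\binom{s+a}{a}\binom{s+b}{b}z^s=\bigl(\sum_i\binom{a}{i}\binom{b}{i}z^i\bigr)/(1-z)^{a+b+1}$ together with the standard rule $[t^1]\sum_{c=1}^t g(c)=\int_0^1 G(-x)\,dx$, where $G$ is the Newton series of $g$; this turns the computation into $\sum_i(-1)^i\binom{a}{i}\binom{b}{i}\frac{i!\,(a+b-i)!}{(a+b+1)!}$, and the trinomial identity $\sum_i(-1)^i\frac{(a+b-i)!}{i!\,(a-i)!\,(b-i)!}=1$ collapses it to $\frac{a!\,b!}{(a+b+1)!}=\frac{1}{(n-1)\binom{n-2}{k-1}}$. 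The main obstacle is the shape-independence step; once the commutator structure is in hand the rest is bookkeeping, and the symmetry $a\leftrightarrow b$ of the Beta integral means I need not track which of $k-1$, $n-k-1$ counts ascents versus descents.
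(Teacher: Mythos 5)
Your proof is correct, and at the decisive step it takes a genuinely different route from the paper. Both arguments begin identically: identify $\mathscr{P}(\mathsf{S})$ with the order polytope of the cell (fence) poset $P$ via Knauer--Mart\'inez-Ram\'irez and invoke Stanley's theorem to get $\ehr(\mathsf{S},t-1)=\Omega_P(t)$. At that point the paper simply cites \cite[Proposition~4.1]{ferroni-morales-panova} --- whose proof rests on a determinantal formula of Kreweras for skew plane partitions --- for the value $\tfrac{(k-1)!(n-k-1)!}{(n-1)!}$ of the linear coefficient, whereas you prove the needed input from scratch. Your transfer-matrix argument is sound: the commutator identity $(AD-DA)_{u,w}=\min(u,w)-(m+1-\max(u,w))=u+w-(m+1)=c_u+c_w$ is correct, and each of the four scalars $\mathbf{1}^\top Xc$, $\mathbf{1}^\top X\mathbf{1}$, $c^\top Y\mathbf{1}$, $\mathbf{1}^\top Y\mathbf{1}$ is indeed a polynomial in $m$ vanishing at $m=0$ (for instance $\mathbf{1}^\top Xc=\mathbf{1}^\top XA\mathbf{1}-\tfrac{m+1}{2}\,\mathbf{1}^\top X\mathbf{1}$ exhibits it as a combination of fence order polynomials), so each swap perturbs $\Omega_P$ by a multiple of $m^2$ and the linear coefficient depends only on the ascent/descent counts $a=k-1$, $b=n-k-1$. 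This is a real gain over the paper, which explicitly remarks that it knows no conceptual explanation for the shape-independence; your commutator argument supplies one. Two small points to tighten: (i) the quoted rule $[t^1]\sum_{c=1}^t g(c)=\int_0^1 G(-x)\,dx$ is off as stated (for $g(c)=c$ it returns $-\tfrac12$ rather than $+\tfrac12$), but this is harmless because the coefficient can be extracted directly --- from $\binom{s+a}{a}\binom{s+b}{b}=\sum_i\binom{a}{i}\binom{b}{i}\binom{s+a+b-i}{a+b}$ one gets $\Omega(t)=\sum_i\binom{a}{i}\binom{b}{i}\binom{t+a+b-i}{a+b+1}$, and the factor $t$ inside the falling factorial gives $[t^1]\binom{t+a+b-i}{a+b+1}=\tfrac{(-1)^i\,i!\,(a+b-i)!}{(a+b+1)!}$, which is exactly the sum you then collapse with the (correct) trinomial identity; (ii) you should say a word justifying that connectedness forces the border strip to span all $k$ rows and all $n-k$ columns, hence has $n-1$ cells with the stated ascent/descent counts, though this is standard from \cite{bonin-demier}.
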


\begin{proof}
Let $P(\mathsf{S})$ be the cell poset of the skew-shape representation of $\mathsf{S}$, and let $\Omega(P(\mathsf{S}),t)$ be the number of order-preserving maps $P \to \{1,\dots,t\}$.
It is well-known that $\Omega(P(\mathsf{S}),t)$ is a polynomial in $t$ (this is true for any poset); see \cite[Section 3.12]{stanley-ec1} for details.
By \cite[Proposition~4.1]{ferroni-morales-panova}, the linear term of $\Omega(P(\mathsf{S}),t)$ is
\[ \frac{(n-k-1)!(k-1)!}{(n-1)!} = \frac{1}{(n-1)\binom{n-2}{k-1}}.\]      
Combining \cite[Theorem 4.1]{stanley-poset_polytopes} and \cite[Theorem~4.7]{knauer-martinez-ramirez} gives
        \[ \Omega(P(\mathsf{S}),t) = \ehr(\mathsf{S},t-1)\,,\]
so in particular their linear terms agree.
\end{proof}

\begin{remark}
    At the moment we do not know of a more conceptual way to explain why the quantity computed in the preceding proposition does not depend on the snake matroid, but only relies on its size and rank. 
    The computation by Ferroni, Morales, and Panova in \cite{ferroni-morales-panova} relies on a determinantal formula due to Kreweras for enumerating skew plane partitions. 
\end{remark}

\begin{lemma}\label{lem:disconnected}
    Let $\M$ be a loopless and coloopless disconnected matroid. 
    Then,
    \[ \frac{\mathrm{d}}{\mathrm{d}t} \thinspace \ehr(\M, t)  \bigg{|}_{t=-1} = [t^1] \ehr(\M, t - 1) = 0.\]
\end{lemma}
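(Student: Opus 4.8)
The plan is to reduce the disconnected case to the already-established snake case via the key structural tool, Theorem~\ref{thm:snakes-valuative-group}, while exploiting the multiplicativity of the Ehrhart polynomial under direct sums. First I would recall that the base polytope of a direct sum decomposes as a product, $\mathscr{P}(\M_1 \oplus \M_2) = \mathscr{P}(\M_1) \times \mathscr{P}(\M_2)$, so that the Ehrhart polynomials multiply:
\[
\ehr(\M_1 \oplus \M_2, t) = \ehr(\M_1, t) \cdot \ehr(\M_2, t).
\]
Since $\M$ is disconnected, write $\M = \M_1 \oplus \M_2$ with each $\M_i$ nonempty. Because $\M$ is loopless and coloopless, each $\M_i$ is also loopless and coloopless; in particular each $\mathscr{P}(\M_i)$ is a genuine polytope of positive dimension $n_i - 1$, where $n_i = |E(\M_i)|$.

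The main idea is to substitute $t \mapsto t-1$ and examine the low-order behavior at $t = 0$. Writing $f_i(t) := \ehr(\M_i, t-1)$, I would observe that $\ehr(\M, t-1) = f_1(t) \cdot f_2(t)$, so that the linear coefficient I want is $[t^1](f_1 f_2) = [t^0]f_1 \cdot [t^1]f_2 + [t^1]f_1 \cdot [t^0]f_2$. Thus the claim $[t^1]\ehr(\M, t-1) = 0$ will follow immediately once I show that the \emph{constant} term $[t^0]f_i = \ehr(\M_i, -1)$ vanishes for each factor. This is exactly the Ehrhart reciprocity statement: $(-1)^{\dim \mathscr{P}(\M_i)} \ehr(\M_i, -1)$ counts the interior lattice points of $\mathscr{P}(\M_i)$, and more directly, $\ehr(\M_i, t-1)$ has no constant term precisely when $\mathscr{P}(\M_i)$ has positive dimension. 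Since $\dim \mathscr{P}(\M_i) = n_i - 1 \geq 1$ (as $\M_i$ is coloopless, so $n_i \geq 2$), Proposition~\ref{pro:linearforsankes} together with \eqref{eq:ehrhart} applied to $\M_i$ already tells us that $\ehr(\M_i, t-1)$ is a multiple of a polynomial whose constant term vanishes. Concretely, I would use that each snake summand $\mathsf{S}$ in the decomposition of $\M_i$ satisfies $\ehr(\mathsf{S}, t-1) = \Omega(P(\mathsf{S}), t)$, and the order polynomial $\Omega(P, t)$ of any nonempty poset has $\Omega(P, 0) = 0$.

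To make this fully rigorous, I would apply Theorem~\ref{thm:snakes-valuative-group} to each factor $\M_i$ to write $\ehr(\M_i, t-1) = \sum_j a_{ij}\, \ehr(\mathsf{S}_{ij}, t-1)$, where each $\mathsf{S}_{ij}$ is a direct sum of snakes (no loops or coloops, matching $\M_i$). By multiplicativity and the snake identity $\ehr(\mathsf{S}, t-1) = \Omega(P(\mathsf{S}), t)$, each $\ehr(\mathsf{S}_{ij}, t-1)$ is a product of order polynomials of nonempty fence posets, each vanishing at $t = 0$; hence $\ehr(\mathsf{S}_{ij}, -1) = 0$ and therefore $\ehr(\M_i, -1) = 0$, i.e.\ $[t^0]f_i = 0$. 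The main obstacle, and the step deserving the most care, is justifying that the constant term genuinely vanishes: one must ensure each factor is positive-dimensional, which is where the coopless hypothesis is essential (a coloop would contribute a single-point factor $\mathscr{P}(\text{coloop})$ with $\ehr \equiv 1$, whose constant term is $1$, not $0$). With both constant terms zero, the product formula gives $[t^1]\ehr(\M, t-1) = 0$, as desired.
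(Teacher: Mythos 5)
Your proof is correct, and its skeleton is the same as the paper's: decompose $\M$ as a direct sum, use that $\ehr(\M,t)$ is the product of the factors' Ehrhart polynomials, and reduce the claim to the vanishing $\ehr(\M_i,-1)=0$ of each factor (the paper phrases the conclusion as ``$-1$ is a root of multiplicity at least $2$,'' which is exactly your product-rule computation of the linear coefficient of $\ehr(\M,t-1)$). Where you genuinely diverge is in how you justify that vanishing. The paper's argument is short and elementary: each $\mathscr{P}(\M_i)$ is a $0/1$-polytope of positive dimension, hence has no lattice points in its relative interior, and Ehrhart--Macdonald reciprocity gives $\ehr(\M_i,-1)=0$. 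You instead push each factor through Theorem~\ref{thm:snakes-valuative-group} and the identity $\ehr(\mathsf{S},t-1)=\Omega(P(\mathsf{S}),t)$, using that the order polynomial of a nonempty poset vanishes at $0$. This does work --- part (ii) of that theorem guarantees the $\mathsf{S}_{ij}$ have no loop or coloop summands, so each $\ehr(\mathsf{S}_{ij},t-1)$ is a product of order polynomials of nonempty fence posets --- but it invokes much heavier machinery than the lemma needs. One caution about your aside: the claim that ``$\ehr(\M_i,t-1)$ has no constant term precisely when $\mathscr{P}(\M_i)$ has positive dimension'' is false for general lattice polytopes, since a positive-dimensional polytope can contain interior lattice points; what makes it true here is the $0/1$ structure of matroid polytopes, which is exactly the point the paper makes explicit. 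Your rigorous route via snakes sidesteps this, so the proof stands, but the reciprocity argument is the cleaner one once the $0/1$ property is cited. (Also, $\dim\mathscr{P}(\M_i)=n_i-1$ only when $\M_i$ is connected; in general it is $n_i$ minus the number of connected components, though positivity of the dimension is all you actually use.)
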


\begin{proof}
Suppose $\M = \M_1\oplus\cdots \oplus\M_s$ is the decomposition of $\M$ as a direct sum of connected matroids, where $s\geq 2$. 
The base polytope of $\M$ decomposes as the Cartesian product $\mathscr{P}(\M) = \mathscr{P}(\M_1) \times \cdots \times \mathscr{P}(\M_s)$. 
    Taking Ehrhart polynomials, we get
        \[ \ehr(\M,t) = \prod_{i=1}^s \ehr(\M_i,t).\]
    Since each $\mathscr{P}(\M_i)$ is a $0/1$-polytope, it does not contain lattice points in its relative interior.
    Using Ehrhart--Macdonald reciprocity (see \cite[Chapter~4]{beck-robins}), this means that $\ehr(\M_i,-1)=0$. The only exception would be when $\mathscr{P}(\M_i)$ is a point, but that case is ruled out because it corresponds to $\M_i$ being a direct sum of loops and coloops.
    Since $s\geq 2$ and each factor $\ehr(\M_i,t)$ vanishes at $t=-1$, we have that $-1$ is a root of $\ehr(\M,t)$ of multiplicity at least $2$, which yields the desired equality.
\end{proof}

We are now ready to prove our main theorem.

\begin{proof}[Proof of Theorem~\ref{thm:main}]
    If $\M$ is loopless and coloopless, Theorem~\ref{thm:snakes-valuative-group} says that we can find matroids $\mathsf{S}_1,\ldots,\mathsf{S}_m$, all of which are direct sums of snake matroids, and integers $a_1,\ldots,a_m$ satisfying the equality in Theorem~\ref{thm:snakes-valuative-group}(\ref{eq:M-as-sum-of-snakes}).
    In particular, Equation~\eqref{eq:ehrhart} holds and we have
    \[[t^1]\ehr(\M,t-1) = \sum_{i=1}^m a_i \thinspace [t^1]\ehr(\mathsf{S}_i,t-1).\]
    By Lemma \ref{lem:disconnected}, if $\mathsf{S}_i$ is disconnected then $[t^1]\ehr(\mathsf{S}_i,t-1)=0$, so we can restrict the sum to (connected) snake matroids.

    The $\beta$-invariant is a valuative function, so Theorem~\ref{thm:snakes-valuative-group}(\ref{eq:M-as-sum-of-snakes}) also implies
    \[ \beta(\M) = \sum_{i=1}^m a_i\,\beta(\mathsf{S}_i).\]
    As mentioned before, the $\beta$-invariant vanishes on disconnected matroids, so this sum can also be restricted to (connected) snake matroids. 
    Moreover, since every snake matroid is a series-parallel matroid, we have $\beta(\mathsf{S}_i) = 1$ for any connected $\mathsf{S}_i$ appearing above, see \cite{brylawski,oxley-beta}.
    This gives
    \[\beta(\M) = \sum_{\text{$\mathsf{S}_i$ connected}} a_i.\]
    Using Proposition \ref{pro:linearforsankes} and putting the pieces together, we get
    \begin{align*}
        [t^1]\ehr(\M,t-1) &= \sum_{\text{$\mathsf{S}_i$ connected}} a_i \thinspace [t^1]\ehr(\mathsf{S}_i,t-1)\\
        &= \sum_{\text{$\mathsf{S}_i$ connected}} a_i \thinspace \frac{1}{(n-1)\binom{n-2}{k-1}}\\
        &= \frac{1}{(n-1)\binom{n-2}{k-1}} \sum_{\text{$\mathsf{S}_i$ connected}} a_i\\
        &= \frac{\beta(\M)}{(n-1)\binom{n-2}{k-1}},
    \end{align*}
    as desired.
\end{proof}

\begin{remark}
    Theorem~\ref{thm:main} is false if we drop the condition that the matroid is both loopless and coloopless. 
    For example, let $\M=\mathsf{U}_{2,4}\oplus \U_{1,1}$ be the direct sum of two uniform matroids.
    The $\beta$-invariant of $\M$ is $0$ since $\M$ is disconnected, but the base polytope of $\M$ is integrally equivalent to the base polytope of $\mathsf U_{2,4}$ and thus the linear term of $\ehr(\M,t-1)$ coincides with the linear term of $\ehr(\U_{2,4},t-1)$, which is equal to~$\frac{1}{3}$.
\end{remark}

\begin{example}
	Let us consider the uniform matroid $\U_{3,8}$. The $\beta$-invariant of the uniform matroid $\U_{k,n}$ equals $\binom{n-2}{k-1}$ (see, for example, \cite[Proposition~7]{crapo}), so we have $\beta(\U_{3,8}) = \binom{6}{2} = 15$. Theorem~\ref{thm:main} says that
		\[[t^1] \ehr(\U_{3,8},t-1)= \frac{15}{7 \cdot \binom{6}{2}} = \frac{1}{7}.\]
	This can be verified directly, for instance using closed formulas for the Ehrhart polynomial of a uniform matroid (see for example \cite{katzman,ferroni1}). In particular, \cite[Proposition~2.1]{ferroni1} gives
		\[ \ehr(\U_{3,8},t-1) = \frac{397}{1680} t^{7} + \frac{49}{144} t^{6} + \frac{1}{48} t^{5} + \frac{19}{144} t^{4} + \frac{1}{10} t^{3} + \frac{1}{36} t^{2} + \frac{1}{7} t,\]
	where the linear terms matches the expected value.	
\end{example}

\begin{remark}
    The $\beta$-invariant appears prominently as the linear coefficient of the $g$-polynomial introduced by Speyer in \cite{speyer} (see also \cite{ferroni-speyer,fink-shaw-speyer,berget-fink}).
    This polynomial has been used as a tool to approach the $f$-vector conjecture, posed by Speyer in \cite{tropicallinearspaces}, concerning the maximal number of faces in a matroid polytope subdivision.
    The conjecture was settled very recently by Berget and Fink \cite{berget-fink}, who proved that the top coefficient of the $g$-polynomial---called the $\omega$-invariant of the matroid---is a nonnegative number.
    
    It follows from \cite[Proposition~12.1]{fink-shaw-speyer} that whenever the matroid $\M$ has rank $k$ and size $n=2k$, we have $\omega(\M) = (-1)^{c(\M)}\cdot\ehr(\M,-2)$, where $c(\M)$ denotes the number of connected components of $\M$. 
    Therefore, in this special case where $n=2k$, $\omega(\M)$ is also a specialization of the Ehrhart polynomial of $\M$.
    However, if $n\neq 2k$, the last assertion is no longer true. 
    We have verified by an exhaustive computation that the $\omega$-invariant of matroids on $7$ elements of rank $3$ is not a linear specialization of the Ehrhart polynomial: we found a collection of connected matroids of this rank and size whose Ehrhart polynomials satisfy a linear relation that is not satisfied by the corresponding $\omega$-invariants.
\end{remark}

\section*{Acknowledgments}
This article resulted from a SQuaRE at the American Institute for Mathematics (AIM). 
The authors thank AIM for providing a very supportive and mathematically rich environment, and are grateful to two anonymous reviewers for their helpful comments and suggestions that improved this article.
Anastasia Chavez is partially supported by NSF grant DMS 2332342. Luis Ferroni was a member at the Institute for Advanced Study, funded by the Minerva Research Foundation. Fu Liu is partially supported by an NSF grant \#2153897-0. 
\bibliographystyle{amsalpha}
\bibliography{bibliography}

\end{document}